\newtheorem{thm}{Theorem}
\newtheorem{dfn}[thm]{Definition}
\newtheorem{lem}[thm]{Lemma}
\newtheorem{prop}[thm]{Proposition}
\newcommand{\C}{\mathds{C}}
\newcommand{\R}{\mathds{R}}
\newcommand{\N}{\mathds{N}}
\newcommand{\cB}{\mathcal{B}}
\newcommand{\cJ}{\mathcal{J}}
\newcommand{\cN}{\mathcal{N}}
\newcommand{\cM}{\mathcal{M}}
\newcommand{\cK}{\mathcal{K}}
\newcommand{\cT}{\mathcal{T}}
\newcommand{\cI}{\mathcal{I}}
\newcommand{\cZ}{\mathcal{Z}}
\newcommand{\ux}{\underline{x}}
\newcommand{\ov}{\overline}
\newcommand{\ii}{\sf{i}}
\newcommand{\Z}{\mathds{Z}}
\newcommand{\cA}{\mathcal{A}}
\newcommand{\cL}{\mathcal{L}}
\newcommand{\Lin}{\rm{Lin}}
\author{Konrad Schm\"udgen}
\address{Universit\"at Leipzig, Mathematisches Institut, Augustusplatz 10/11, D-04109 Leipzig, Germany}
\email{schmuedgen@math.uni-leipzig.de}
\date{}
\begin{document}

\maketitle

\begin{abstract}
The fibre theorem \cite{schm2003} for the moment problem on closed semi-algebraic subsets of $\R^d$ is generalized to finitely generated real unital algebras. A charaterization of moment functionals on the polynomial algebra $\R[x_1,\dots,x_d]$ in terms of extensions is given. These results are applied to reprove basic results on the complex moment problem due to  Stochel and Szafraniec \cite{stochelsz} and Bisgaard \cite{bisgaard}.

\end{abstract}

\textbf{AMS  Subject  Classification (2000)}.
 44A60, 14P10.\\

\textbf{Key  words:} moment problem, real algebraic geometry

\section {Introduction}

The present paper deals with the classical moment problem on  $\R^d$. A useful result on the existence of solutions is the fibre theorem \cite{schm2003} for  closed semi-algebraic subsets of $\R^d$. The proof given  in \cite{schm2003} was based on the decomposition theory of states on $*$-algebras. An elementary proof has been found   by T. Netzer \cite{netzer}, see also    M. Marshall \cite[Chapter 4]{marshall}. 

In this paper we first generalize the fibre theorem from the polynomial algebra $\R[x_1,\dots,x_d]$ to arbitary finitely generated unital real algebras (Theorem \ref{fibreth1}). This general version is the starting point for a number of  applications. The main result of the paper is an  extension theorem (Theorem \ref{extensionthm}) that provides a necessary and sufficient condition for a linear functional on $\R[x_1,\dots,x_d]$ being a moment functional. In the case $d=2$ this result leads to a basic theorem of J. Stochel and F. H. Szafraniec \cite{stochelsz} on the complex moment problem (Theorem  \ref{extcmpstochelsz}). Another application of the generalized fibre theorem is a short proof of a theorem of T.M. Bisgaard on the two-sided complex moment problem (Theorem \ref{bisgaardthm}).

Let us fix some definitions and notations which are used throughout this paper.
A {\it complex $*$-algebra} $\cB$ is a complex algebra equipped with 
an involution, that is, 
an antilinear mapping $\cB\ni b\to b^*\in \cB$ satisfying $(bc)^*= c^*b^*$ and $(b^*)^*=b$  for\, $b,c\in \cB$. 
Let $\sum \cB^2$ denote the set of finite sums $\sum_j b_j^*b_j$ of hermitean squares $b_j^*b_j$, where $b_j\in \cB$. A linear functional $L$ on $\cB$ is called {\it positive} if $L$ is nonnegative on $\sum \cB^2$, that is, if\, $L(b^*b)\geq 0$\, for all $b\in \cB$.

A  {\it $\ast$-semigroup}\, is a semigroup $S$ with a mapping\, $s^*\to s$\, of\, $S$\, into itself, called  involution such that $(st)^*=t^*s^*$ and $(s^*)^*=s$ for $s,t\in S$. The {\it semigroup $\ast$-algebra} $\C S$ of $S$ is the complex $*$-algebra is the vector space  of all finite sums $\sum_{s\in S} \alpha_s s$, where $\alpha_s\in \C$, with product and involution 
\begin{align*}
\big(\sum\nolimits_s\alpha_s s\big)\big(\sum\nolimits_{t} \beta_t t\big):= \sum\nolimits_{s,t} \alpha_s\beta_t st, \quad \big(\sum\nolimits_{s} \alpha_s s\big)^*:=\sum\nolimits_{s} \ov{\alpha}_s\, s^*.
\end{align*}
The polynomial algebra $\R[x_1,\dots,x_d]$ is abbreviated by $\R_d[\ux]$.

\section{A generalization of the fibre theorem}\label{propertiessmpandmpandfibretheorem}

Throughout this section  $\cA$ is a {\it finitely generated commutative real unital algebra}.  By a character  of $\cA$ we mean an algebra homomorphism $\chi:\cA\to \R$ satisfying $\chi(1)=1$. We equipp the set $\hat{\cA}$ of  characters of $\cA$ with the weak topology. 

Let us fix a set $\{f_1,\dots,f_d\}$  of  generators of the algebra $\cA$. Then there is a algebra homomorphism $\pi:\R[\ux]\to \cA$ such that $\pi(x_j)=f_j$,  $j=1,\dots,d.$ If\, $\cJ$ denotes the kernel of $\pi$, then  $\cA$ is isomorphic to the quotient algebra\, $\R_d[\ux]/ \cJ$.
Further, each character $\chi$ is completely determined by  the point $x_\chi:=(\chi(f_1),\dots,\chi(f_d))$ of $\R^d$. For simplicity we will identifiy $\chi$ with $x_\chi$ and write $f(x_\chi):=\chi(f)$ for $f\in \cA$. Then\, $\hat{\cA}$ becomes a real algebraic subvariety of $\R^d$. In the special case $\cA=\R[x_1,\dots,x_d]$ we can take $f_1=x_1,\dots,f_d=x_d$\, and obtain $\hat{\cA}\cong \R^d$. 
\begin{dfn}
A {\rm preorder} of\, $\cA$ is a subset $\cT$ of $\cA$ such that 
\begin{align*}
\cT\cdot \cT\subseteq \cT,~~\cT + \cT \subseteq \cT,~~ 1 \in \cT,~~
a^2 \cT  \in \cT~~{\rm for~all}~~a\in \cA.
\end{align*} 
\end{dfn}
Let $\sum \cA^2$ denote  the set of  finite sums $\sum_i a_i^2$ of squares of elements $a_i\in \cA$.
 Since $\cA$ is commutative,   $\sum \cA^2$ is invariant under multiplication and hence  the smallest  preorder of $\cA$. 

For a preorder  $\cT$  of $\cA$, we define 
\begin{align*}
\cK(\cT)=\{ x\in \hat{\cA}:  f(x)\geq 0 ~~ {\rm for~all}~f\in \cT\}.
\end{align*}
The main concepts are introduced in the following definition, see \cite{schm2003} or \cite{marshall}. 
\begin{dfn}\label{definionsmpmp}
A preorder $\cT$ of\, $\cA$ has the\\
$\bullet$  \emph{moment property (MP)}\, if   each $\cT$-positive linear functional $L$ on $\cA$ is a moment functional, that is, there exists a positive Borel measure $\mu\in \cM(\hat{\cA})$ such that
\begin{align}\label{Lrepreborelmu}
L(f) =\int_{\hat{\cA}}\, f(x) \, d\mu(x) \quad {\rm for~all}~~f\in \cA,
\end{align} 
$\bullet$  \emph{strong moment property (SMP)} if  each $\cT$-positive linear functional $L$ on $\cA$ is a $\cK(\cT)$--moment functional, that is, there is a positive Borel measure $\mu\in \cM(\hat{\cA})$ such that\, 
${\rm supp}\, \mu\subseteq \cK(\cT)$\, and
(\ref{Lrepreborelmu}) holds.
\end{dfn}

To state our main result (Theorem \ref{fibreth1})  we need some preparations.

Suppose that $\cT$ a finitely generated preorder of $\cA$ and
let ${\sf f}=\{f_1,\dots,f_k\}$ be a  sequence of generators of  $\cT$. 
We consider a fixed $m$-tuple\, ${\sf h}=(h_1,\dots,h_m)$\,  of  elements $h_k\in \cA$. Let  $\ov{{\sf h}(\cK(\cT))}$ denote the closure of the subset ${\sf h}(\cK(\cT))\subseteq \hat{\cA}$  in $\hat{\cA}$, where ${\sf h}(\cK(\cT))$ is defined by
$$
 {\sf h}(\cK(\cT))=\{(h_1(x),\dots,h_m(x)); x \in \cK(\cT)\}.
 $$
For $\lambda=(\lambda_1,\dots,\lambda_r)\in \R^m $ we denote by $\cK(\cT)_\lambda$ the semi-algebraic set
$$
\cK(\cT)_\lambda=\{ x\in \cK(\cT): h_1(x)=\lambda_1,\dots,h_m(x)=\lambda_m \}
$$ 
and by\, $\cT_\lambda$ the  preorder generated by the sequence
$${\sf f}(\lambda):= \{f_1,\dots,f_k,h_1-\lambda_1,\lambda_1-h_1,\dots,h_m-\lambda_m,\lambda_m
-h_m\}$$
Then $\cK(\cT)$ is  the disjoint union of fibre set $\cK(\cT)_\lambda=\cK(\cT_\lambda)$, where $\lambda \in{\sf h}(\cK(\cT))$.

Let $\cI_\lambda$ be the ideal of $\cA$ generated by $h_1-\lambda_1,\dots,h_m-\lambda_m$. Then we have
$\cT_\lambda:=\cT +\cI_\lambda$ and  the preorder $\cT_\lambda/\cI_\lambda$ of the quotient algebra $\cA/\cI_\lambda$  is generated by 
$$
\pi_\lambda({\sf f}):=\{\pi_\lambda(f_1),\dots,\pi_\lambda(f_k)\},$$
where $\pi_\lambda:\cA\to \cA/\cI_\lambda$ denotes the canonical map. 

Further, let $\hat{\cI}_\lambda:=\cI(\cZ(\cI_\lambda))$ denote the ideal of all elements $f\in \cA$ which vanish on the zero set $\cZ(\cI_\lambda)$ of $\cI_\lambda$. Clearly, $\cI_\lambda \subseteq \hat{\cI}_\lambda$ and $\cZ(\cI_\lambda)=\cZ(\hat{\cI}_\lambda)$.
Set $\hat{\cT}_\lambda:=\cT +\hat{\cI}_\lambda$. Then $\hat{\cT}_\lambda/\hat{\cI}_\lambda$ is a preorder of the quotient algebra $\cA/\hat{\cI}_\lambda$.

Note that in  general we have $\cI_\lambda \neq \hat{\cI}_\lambda$ and equality holds if and only if the ideal $\cI_\lambda$ is {\it real}. The latter means that $\sum_j a_j^2\in \cI_\lambda$ for finitely  many elements $a_j\in \cA$ always implies that $a_j\in \cI_\lambda$ for all $j$.
\begin{thm}\label{fibreth1} 
Let $\cA$ be a finitely generated commutative real unital algebra and let $\cT$ be a finitely generated preorder of  $\cA$. Suppose that  $h_1,\dots,h_m$ are elements of $\cA$ that are bounded on the  set  $\cK(\cT)$. Then the following are equivalent:\\
(i) $\cT$ has property (SMP) (resp. (MP)) in $\cA$.\\
(ii)  $\cT_\lambda$ satisfies (SMP) (resp. (MP)) in\, $\cA$\, for all\, $\lambda \in {\sf h}(\cK(\cT)).$\\
$(ii)^\prime$  $\hat{\cT}_\lambda$ satisfies (SMP) (resp. (MP)) in\, $\cA$\, for all\, $\lambda \in {\sf h}(\cK(\cT)).$\\
(iii) $\cT_\lambda/\cI_\lambda$ has (SMP) (resp. (MP)) in\, $\cA/\cI_\lambda$\, for all\, $\lambda \in {\sf h}(\cK(\cT)).$\\
$(iii)^\prime$   $\hat{\cT}_\lambda/\hat{\cI}_\lambda$ has (SMP) (resp. (MP)) in\, $\cA/\hat{\cI}_\lambda$\, for all\, $\lambda \in {\sf h}(\cK(\cT)).$ 
\end{thm}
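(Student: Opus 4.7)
The plan is to reduce the five equivalences to two ingredients: the classical fibre theorem in $\R_d[\ux]$ from \cite{schm2003}, and the elementary behaviour of preorders under quotients and real radicals. Concretely, I would prove (i) $\Leftrightarrow$ (ii) by lifting to the polynomial algebra, then (ii) $\Leftrightarrow$ (iii) and $(ii)'\Leftrightarrow(iii)'$ by descending to the quotients $\cA/\cI_\lambda$ and $\cA/\hat{\cI}_\lambda$, and finally (ii) $\Leftrightarrow$ $(ii)'$ by showing that every $\cT_\lambda$-positive functional is automatically $\hat{\cT}_\lambda$-positive.

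For (i) $\Leftrightarrow$ (ii), I would lift all the data to $\R_d[\ux]$: pick polynomials $\tilde{f}_i,\tilde{h}_j$ with $\pi(\tilde{f}_i)=f_i$, $\pi(\tilde{h}_j)=h_j$, and generators $g_1,\dots,g_r$ of $\cJ=\ker\pi$. Let $\tilde{\cT}$ be the preorder of $\R_d[\ux]$ generated by $\tilde{f}_1,\dots,\tilde{f}_k,g_1,-g_1,\dots,g_r,-g_r$, and $\tilde{\cT}_\lambda$ the one obtained by additionally adjoining $\pm(\tilde{h}_j-\lambda_j)$. Then $\cK(\tilde{\cT})=\cZ(\cJ)\cap\{\tilde{f}_i\ge 0\}=\cK(\cT)\subseteq\R^d$, so the $\tilde{h}_j$ are bounded on $\cK(\tilde{\cT})$. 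Since $\pm g_l\in\tilde{\cT}$, any $\tilde{\cT}$-positive functional $\tilde{L}$ on $\R_d[\ux]$ annihilates $\cJ$ (write $q=\tfrac14((q+1)^2-(q-1)^2)$ and use $\tilde{L}(\pm p^2 g_l)\ge 0$), hence descends to a $\cT$-positive $L$ on $\cA$; every representing measure of $\tilde{L}$ on $\R^d$ sits on $\cZ(\cJ)\cong\hat{\cA}$ and yields a representing measure for $L$. The same correspondence relates $\tilde{\cT}_\lambda$ and $\cT_\lambda$. Applying the classical fibre theorem to $\tilde{\cT}$ and the bounded polynomials $\tilde{h}_j$ in $\R_d[\ux]$ therefore yields (i) $\Leftrightarrow$ (ii), for (SMP) and (MP) simultaneously.

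The remaining equivalences are algebraic. If $L$ is $\cT_\lambda$-positive on $\cA$, then $\pm p^2(h_j-\lambda_j)\in\cT_\lambda$ gives $L(p^2(h_j-\lambda_j))=0$ for all $p\in\cA$; polarising in $p$ forces $L(\cI_\lambda)=0$, so $L$ descends to a $\cT_\lambda/\cI_\lambda$-positive functional on $\cA/\cI_\lambda$, and representing measures on $\widehat{\cA/\cI_\lambda}=\cZ(\cI_\lambda)$ correspond to representing measures for $L$ on $\hat{\cA}$, giving (ii) $\Leftrightarrow$ (iii); the same argument with $\hat{\cT}_\lambda$ (where $L(\hat{\cI}_\lambda)=0$ is immediate from $\pm g\in\hat{\cT}_\lambda$ for $g\in\hat{\cI}_\lambda$) yields $(ii)'\Leftrightarrow(iii)'$. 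Finally (ii) $\Rightarrow(ii)'$ is immediate from $\cT_\lambda\subseteq\hat{\cT}_\lambda$ and $\cK(\cT_\lambda)=\cK(\hat{\cT}_\lambda)$, while for $(ii)'\Rightarrow$ (ii) it suffices to show every $\cT_\lambda$-positive $L$ is already $\hat{\cT}_\lambda$-positive. Here the real Nullstellensatz applied in $\cA$ provides, for each $a\in\hat{\cI}_\lambda$, an identity $a^{2m}+\sum_j b_j^2\in\cI_\lambda$; applying $L$ gives $L(a^{2m})=-\sum_j L(b_j^2)\le 0$, which together with $L(a^{2m})\ge 0$ forces $L(a^{2m})=0$, and iterated Cauchy--Schwarz for the semi-definite form $(p,q)\mapsto L(pq)$ propagates this to $L(a)=0$. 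The main obstacle will be to set up the lifted preorder $\tilde{\cT}$ so that the classical fibre theorem in $\R_d[\ux]$ transfers cleanly back to $\cA$; the Nullstellensatz step for $(ii)\Leftrightarrow(ii)'$ is delicate but self-contained.
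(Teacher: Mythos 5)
Your proposal is correct and follows essentially the same route as the paper: lift everything to $\R_d[\ux]$ via the presentation $\cA\cong\R_d[\ux]/\cJ$ (your preorder generated by the $\tilde f_i$ and $\pm g_l$ is exactly the paper's $\pi^{-1}(\cT)=\cT+\cJ$), invoke the classical fibre theorem of \cite{schm2003} there, handle (ii)$\leftrightarrow$(iii) and $(ii)'\leftrightarrow(iii)'$ by the quotient correspondence (which the paper simply cites from \cite{scheiderer}), and prove (ii)$\leftrightarrow$(ii)$'$ by the real Nullstellensatz plus iterated Cauchy--Schwarz, exactly as in Proposition \ref{quotientsmpmp}(iii). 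The only cosmetic difference is that the paper arranges the Nullstellensatz exponent to be a power of $2$ before iterating Cauchy--Schwarz, whereas your downward induction on the exponent works equally well.
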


 Proposition \ref{quotientsmpmp}(i) gives
the  implication (i)$\to$(ii),  Proposition \ref{quotientsmpmp}(ii) yields   the equivalences (ii)$\leftrightarrow$(iii) and (ii)$^\prime\leftrightarrow$(iii)$^\prime$, and  Proposition \ref{quotientsmpmp}(iii) implies equivalence (ii)$\leftrightarrow$(ii)$^\prime$.

The  main assertion of Theorem \ref{fibreth1} is the implication (ii)$\to$(i). Its proof   is lenghty and technically involved. 
In the proof given below we reduce the general case to the case $\R_d[\ux]$.

\begin{prop}\label{quotientsmpmp}
Let $\cI$ be an ideal and $\cT$  a finitely generated preorder  of $\cA$. Let $\cI$ be the ideal of all $f\in \cA$ which vanish on the zero set $\cZ(\cI)$ of $\cI$.\\
(i) If\,  $\cT$ satisfies (SMP) (resp. (MP)) in $\cA$,  so does   $\cT+\cI$.\\
(ii) $\cT+\cI$ satisfies (SMP) (resp. (MP)) in $\cA$ if and only if 
$(\cT+\cI)/\cI$ does in $\cA/\cI$.\\
(iii)  $\cT+\cI$ obeys (SMP) (resp. (MP)) in $\cA$ if and only if $\cT+\hat{\cI}$ does.
\end{prop}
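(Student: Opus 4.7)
The unifying observation is that any $(\cT+\cI)$-positive linear functional $L$ on $\cA$ must annihilate $\cI$: for every $g\in\cI$ both $g$ and $-g$ lie in $\cI\subseteq\cT+\cI$, so $L(g)=0$. Combined with the natural identification $\widehat{\cA/\cI}\cong\cZ(\cI)\subseteq\hat\cA$ (under which $\cK((\cT+\cI)/\cI)$ corresponds to $\cK(\cT+\cI)$), this makes (i) and (ii) essentially bookkeeping, and reduces (iii) to a single real-Nullstellensatz argument combined with Cauchy--Schwarz.

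For (i), I take $L$ $(\cT+\cI)$-positive. Since $\cT\subseteq\cT+\cI$, $L$ is $\cT$-positive, so (SMP) (resp.\ (MP)) for $\cT$ yields a representing measure $\mu$ with $\mathrm{supp}\,\mu\subseteq\cK(\cT)$ (resp.\ $\mu\in\cM(\hat\cA)$). The ideal $\cI$ has finitely many generators $g_1,\dots,g_r$ by Hilbert's basis theorem; each $g_i^2\in\cI$ gives $\int g_i^2\,d\mu=L(g_i^2)=0$, so $g_i=0$ $\mu$-a.e. Hence $\mathrm{supp}\,\mu\subseteq\cZ(\cI)\cap\cK(\cT)=\cK(\cT+\cI)$, which yields (SMP); the (MP) conclusion requires no support argument.

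Part (ii) is a transport-of-structure argument. A $(\cT+\cI)/\cI$-positive functional $\bar L$ on $\cA/\cI$ lifts to the $(\cT+\cI)$-positive functional $L=\bar L\circ\pi$ on $\cA$; conversely any $(\cT+\cI)$-positive $L$ vanishes on $\cI$ by the observation above and descends through $\pi$ to a $(\cT+\cI)/\cI$-positive $\bar L$. A representing measure on either side pushes or pulls through the identification $\widehat{\cA/\cI}\cong\cZ(\cI)$ to a representing measure on the other, preserving the support condition because the support is automatically contained in $\cZ(\cI)$ in both cases.

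For (iii) the implication $\cT+\cI\Rightarrow\cT+\hat\cI$ is immediate: any $(\cT+\hat\cI)$-positive $L$ is a fortiori $(\cT+\cI)$-positive, and $\cK(\cT+\cI)=\cK(\cT+\hat\cI)$ since $\cZ(\cI)=\cZ(\hat\cI)$. The converse is the main obstacle; it suffices to show that every $(\cT+\cI)$-positive $L$ is actually $(\cT+\hat\cI)$-positive, i.e.\ that $L(f)=0$ for every $f\in\hat\cI$. Here I invoke the real Nullstellensatz, which identifies $\hat\cI$ with the real radical of $\cI$: for each $f\in\hat\cI$ there exist $n\ge 1$ and $\sigma\in\sum\cA^2$ with $f^{2n}+\sigma\in\cI$. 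Applying $L$ gives $0=L(f^{2n})+L(\sigma)$ with both summands nonnegative, so $L(f^{2n})=0$. The Cauchy--Schwarz inequality $L(f^{2k})^2\le L(f^{2k-2})L(f^{2k+2})$ (valid because $L\ge 0$ on $\sum\cA^2$), iterated downward from $k=n-1$, forces $L(f^2)=0$, and one further application yields $L(f)^2\le L(1)L(f^2)=0$. With $L$ now $(\cT+\hat\cI)$-positive, the assumed (SMP) (resp.\ (MP)) of $\cT+\hat\cI$ supplies the representing measure.
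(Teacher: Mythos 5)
Your proof is correct and, for part (iii) --- the only part the paper proves in detail --- it follows essentially the same route: real Nullstellensatz to get $f^{2n}+\sigma\in\cI$ with $\sigma\in\sum\cA^2$, then iterated Cauchy--Schwarz to force $L(f)=0$. The only cosmetic differences are that the paper pulls $f$ back to $\R_d[\ux]$ to invoke the Nullstellensatz there and normalizes the exponent to a power of $2$ before iterating, whereas you invoke the real-radical form directly in $\cA$ and run the cleaner downward induction $L(f^{2k})^2\le L(f^{2k-2})L(f^{2k+2})$; both are fine. For (i) and (ii) the paper simply cites Scheiderer, so your self-contained arguments (annihilation of $\cI$ by $(\cT+\cI)$-positive functionals, support confinement via $\int g_i^2\,d\mu=0$ for generators $g_i$, and transport of measures through $\widehat{\cA/\cI}\cong\cZ(\cI)$) are a welcome addition and are sound; note that the finite generation of $\cI$ is not even needed there, since one can intersect $\cZ(g)$ over all $g\in\cI$ directly.
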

\begin{proof}
(i): Seee e.g. \cite{scheiderer}, Proposition 4.6. 

(ii) See e.g. \cite{scheiderer},  Lemma 4.5.

(iii): It suffices to show that both preorders $\cT+\cI$ and $\cT+\hat{\cI}$ have the same positive characters and the same positive linear functionals on $\cA$. For the sets of characters, using the equality $\cZ(\cI)=\cZ(\hat{\cI})$ we obtain 
\begin{align*}
\cK(\cT+\cI)=\cK(\cT)\cap \cZ(\cI)=\cK(\cT)\cap \cZ(\hat{\cI})=\cK(\cT+\hat{\cI}).
\end{align*}
Since $\cT+\cI\subseteq \cT+\hat{\cI}$, a $(\cT+\hat{\cI})$-positive functional is trivially $(\cT+\cI)$-positive.
 
Conversely, let $L$ be a\, $(\cT+\cI)$-positive linear functional on $\cA$ and let $a\in \hat{\cI}$. Set  $a^\prime:=\pi^{-1}(a)$. Recall that  $\cA\cong \R[\ux]/ \cJ$ as noted  above. Clearly, $\cI^\prime:=\pi^{-1}(\cI)$ is an ideal of $\R_d[\ux]$ and we have $\cZ(\cI^\prime)=\cZ(\cJ)\cap \cZ(\cI)$. Hence, since $\pi(a)=a^\prime$, 
the polynomial $a^\prime\in \R_d[\ux]$ vanishes on $\cZ(\cI^\prime)$. Therefore, by the real Nullstellensatz \cite[Theorem 2.2.1, (3)]{marshall}, there are $m\in \N$ and $b\in \sum \R_d[\ux]^2$ such that $c^\prime:=(a^\prime)^{2m}+b\in \cI^\prime$. Upon multiplying $c^\prime$ by some  even power of $a^\prime$ we can assume that $2m=2^k$ for some $k\in\N$. Then   
\begin{align}\label{cprimepib}
c:=\pi(c^\prime)=a^{2^k}+\pi(b)\in \cI ,\quad {\rm  where}\quad \pi(b)\in \sum \cA^2.
\end{align} 
Being $(\cT+\cI)$-positive, $L$  annihilates $\cI$ and  is nonnegative on $\sum \cA^2$. Therefore, by (\ref{cprimepib}),
$$
0=L(c)=L\big(a^{2^k}\big)+ L(\pi(b)),~~ L(\pi(b))\geq 0,~~ L\big(a^{2^k}\big)\geq 0.
$$
Hence $L(a^{2^k})=0$.
Since $L$ is nonnegative on $\sum \cA^2$, the Cauchy-Schwarz inequality holds. By a repeated application of this inequality we derive  
\begin{align*}
|L(a)|^{2^k}&\leq L(a^2)^{2^{k-1}} L(1)^{2^{k-1}}\leq L(a^4)^{2^{k-2}} L(1)^{2^{k-2}+2^{k-1}}\leq \dots \\&\leq L(a^{2^k}) L(1)^{1+\dots+2^{k-1}} =0.
\end{align*}
Thus $L(a)=0$.  That is, $L$ annihilates $\hat{\cI}$. Hence $L$ is $(\cT+\hat{\cI})$-positive which  completes the proof of (iii): 
\end{proof}

\noindent
{\it Proof of the implication (ii)$\to$(i) of Theorem \ref{fibreth1}:}\\
As noted at the beginning of this section 
 there is an algebra homomorphism $\pi:\R[\ux]\to \cA$ such that $\pi(x_j)=f_j$,  $j=1,\dots,d,$ 
and $\cA$ is isomorphic to the quotient algebra\, $\R_d[\ux]/ \cJ$, where $\cJ$ is the kernel of $\pi$.
 We  choose polynomials $\tilde{h}_j\in \R_d[\ux]$ such that $\pi(\tilde{h}_j)=h_j$. Clearly, $\tilde{\cT}:=\pi^{-1}(\cT)$ is a  preorder of $\R_d[\ux]$ such that $\tilde{\cT}=\cT+\cJ$ and $\cK(\tilde{\cT})=\cK(\cT)$. Hence each polynomial\, $\tilde{h}_j$\, is bounded on $\cK(\tilde{\cT})$ and\, $ {\sf \tilde{ h}}(\cK(\tilde{\cT}))= {\sf h}(\cK(\cT)).$\, Further, $\tilde{\cT}_\lambda:=\tilde{\cT}+\cI_\lambda= \cT_\lambda+\cJ$\, is also a preorder of $\R_d[\ux]$. By Proposition \ref{quotientsmpmp}(ii), $\tilde{\cT}_\lambda$ resp. $\tilde{\cT}$ has (SMP) (resp. (MP)) in\, $\R_d[\ux]$\, if and only $\cT_\lambda$ resp. $\cT$ does in $\cA=\R_d[\ux]/\cJ$. Therefore, the assertion (ii)$\to$(i)  of Theorem \ref{fibreth1} follows from the corresponding result for the preorders $\tilde{\cT}$ and $\tilde{\cT}_\lambda$ of the algebra $\R_d[\ux]$ proved in \cite{schm2003}. \hfill $\Box$

\smallskip

Theorem \ref{fibreth1} is  formulated for
a commutative unital {\it real} algebra $\cA$. In Sections \ref{applcmp} and \ref{twosdidecmp} we are concerned with commutative {\it complex} semigroup $*$-algebras.
This case can be easily reduced to Theorem \ref{fibreth1} as we discuss in what follows. 

If $\cB$ is a  commutative complex $*$-algebra, its hermitean part $$\cA=\cB_h:=\{ b\in \cB:b=b^*\}$$ is a commutative real algebra.

Conversely, suppose that $\cA$ is a commutative real algebra. Then  
its complexification $\cB:=\cA+ i\cA$  is a commutative  complex $*$-algebra with involution $(a_1+i a_2)^*:=a_1-i a_2$ and  scalar multiplication $$(\alpha+i\beta)(a_1+i a_2):= \alpha a_1-\beta a_2 + i (\alpha a_2 +\beta a_1),~~  \alpha,\beta \in \R,~a_1,a_2\in \cA,$$  and  $\cA$ is the hermitean part $\cB_h$ of $\cB$.  Let $b\in \cB$. Then we have $b=a_1+i a_2$ with $a_1,a_2\in \cA$ and since $\cA$  is commutative, we get
\begin{align}\label{sumba^2}
 b^*b=(a_1-i a_2)(a_1+i a_2)=a_1^2+a_2^2 +i(a_1a_2-a_2a_1) =a_1^2+a_2^2.
\end{align}
 Hence, if $\cT$ is a preorder of $\cA$, then\, $b^*b\,\cT\subseteq \cT$\, for $b\in \cB$. In particular,  $\sum \cB^2=\sum \cA^2$.

Further, each $\R$-linear functional $L$ on $\cA$ has a unique extension $\tilde{L}$ to a $\C$-linear functional  on $\cB$. By (\ref{sumba^2}), $L$ is nonnegative on $\sum\cA^2$ if and only of  $\tilde{L}$ is nonnegative on $\sum \cB^2$, that is, $\tilde{L}$ is a positive linear functional on $\cB$.
\section{An extension theorem}\label{extensionsection}
In this section we derive a theorem which characterizes  moment functional on $\R^d$ in terms of extensions.

Throughout let $\cA$ denote the real algebra of functions on $(\R^d)^\times:=\R^d\backslash \{0\}$ generated by the polynomial algebra $\R_d[\ux]$ and the functions
\begin{align}\label{sumfkl1}
f_{kl}(x):=x_kx_l(x_1^2+\dots+x_d^2)^{-1},~~{\rm  where}~~ k,l=1,\dots,d,\, x\in\R^d\backslash \{0\} .
\end{align}
Clearly, these functions satisfy the identity
\begin{align}
\sum_{k,l=1}^d\, f_{kl}(x)^2=1.
\end{align}
That is, the functions $f_{kl}$, $k,l=1,\dots,d$, generate the coordinate algebra $C(S^d)$ of the unit sphere $S^d$ in $\R^d$.
The next lemma describes  the character set $\hat{\cA}$  of $\cA$.
\begin{lem}\label{charactersetahat}
The set $\hat{\cA}$ is parametrized by the disjoint union of $\R^d\backslash \{0\} $ and $ S^d$. For\, $x\in \R^d\backslash \{0\}$\, the  character $\chi_x$  is the  evaluation of functions  at $x$ and  for $t\in S^d$ the character $\chi^t$ acts by $\chi^t(x_j)=0$ and $\chi^t(f_{kl})=f_{kl}(t)$, where $j,k,l=1,\dots,d$.
 \end{lem}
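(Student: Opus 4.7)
The plan is to determine all characters of $\cA$ by analyzing how they must act on the generators $x_1,\dots,x_d,f_{kl}$ subject to the relations holding in $\cA$. First I would set $a_j:=\chi(x_j)$ and $B_{kl}:=\chi(f_{kl})$; then $\chi$ is completely determined by the pair $(a,B)$, where $a\in\R^d$ and $B=(B_{kl})$ is a symmetric $d\times d$ matrix. The relations I will exploit all hold pointwise on $(\R^d)\backslash\{0\}$ and hence in $\cA$: the defining identity $f_{kl}(x_1^2+\dots+x_d^2)=x_kx_l$, the ``rank one'' identity $f_{kl}f_{mn}=f_{km}f_{ln}$, the trace identity $\sum_k f_{kk}=1$, and the normalization $\sum_{k,l}f_{kl}^2=1$. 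Applying $\chi$ translates these into
\[
B_{kl}(a_1^2+\dots+a_d^2)=a_ka_l,\quad B_{kl}B_{mn}=B_{km}B_{ln},\quad \sum_k B_{kk}=1,\quad \sum_{k,l}B_{kl}^2=1.
\]

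Next I would split into two cases according to whether $a\neq 0$ or $a=0$. If $a\neq 0$, the first relation immediately forces $B_{kl}=a_ka_l/(a_1^2+\dots+a_d^2)=f_{kl}(a)$, so $\chi$ agrees with the evaluation character $\chi_a$ at $a\in\R^d\backslash\{0\}$. If $a=0$, I need to exhibit $t\in S^d$ with $B_{kl}=t_kt_l$. Setting $m=k$ and $n=l$ in the rank-one relation gives $B_{kl}^2=B_{kk}B_{ll}$, which forces $B_{kk}B_{ll}\geq 0$ for all $k,l$; combined with $\sum_k B_{kk}=1$ this rules out a uniformly nonpositive diagonal and gives $B_{kk}\geq 0$, so $t_k:=\sqrt{B_{kk}}$ defines a vector $t$ on the unit sphere $S^d$.

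The main technical obstacle is the sign bookkeeping: the identity $B_{kl}^2=t_k^2t_l^2$ determines $B_{kl}$ only up to sign, and these signs must be compatible with a single choice of sphere point rather than chosen independently for each pair $(k,l)$. To settle this I would pick an index $k_0$ with $t_{k_0}>0$, set $s_k:=B_{k_0k}/t_{k_0}$, and verify via $B_{k_0k}^2=B_{k_0k_0}B_{kk}$ that $s_k^2=t_k^2$; then specializing $m=n=k_0$ in the rank-one identity yields $B_{kl}\,t_{k_0}^2=B_{kk_0}B_{lk_0}$, i.e.\ $B_{kl}=s_ks_l$, so $\chi=\chi^s$ with $s\in S^d$. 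Disjointness of the two families is automatic, since the sphere-type characters annihilate every $x_j$ while the evaluations at nonzero points do not; conversely, the assignments $\chi_a$ and $\chi^t$ are both readily seen to respect all relations of $\cA$ and hence really define characters.
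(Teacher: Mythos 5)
Your proof is correct, and the first half (the case $\chi(x_1),\dots,\chi(x_d))\neq 0$) is exactly the paper's argument from the identity $(x_1^2+\cdots+x_d^2)f_{kl}=x_kx_l$. For the degenerate case you take a genuinely different route: the paper passes to the quotient of $\cA$ by the ideal generated by the $x_j$, identifies it with the coordinate algebra generated by the $f_{kl}$ on the sphere, and simply asserts that every character of that algebra is evaluation at a sphere point; you instead prove that assertion by hand, using the rank-one identity $f_{kl}f_{mn}=f_{km}f_{ln}$ together with the trace relation $\sum_k f_{kk}=1$ to show that the matrix $B=(\chi(f_{kl}))$ is a positive semidefinite rank-one matrix of trace one, hence $B_{kl}=s_ks_l$ for an explicit unit vector $s$. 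This is more elementary and actually supplies a detail the paper leaves implicit; the price is the sign bookkeeping you carry out, and note that your construction also makes visible that $\chi^t=\chi^{-t}$ (the $f_{kl}$ are even), so the parametrization by the sphere is two-to-one — a harmless looseness shared with the paper's statement. The one step you wave at, that the prescription $\chi^t(x_j)=0$, $\chi^t(f_{kl})=t_kt_l$ really extends to a character of the \emph{function} algebra $\cA$ (which is not given by generators and relations), is equally unargued in the paper; it is most cleanly justified by writing $\chi^t(f)=\lim_{r\to 0^+}f(rt)$, a limit of point evaluations, which is precisely the device the paper uses later in the proof of Theorem \ref{extensionthm}.
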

 \begin{proof}
It is obvious that for any $x\in \R^d\backslash \{0\}$ the point evaluation $\chi_x$ at $x$ is a character on the algebra $\cA$ satisfying $(\chi(x_1),\dots,\chi(x_d))\neq 0$.  

Conversely, let $\chi$  be a character of $\cA$ such that
 $x:=(\chi(x_1),\dots,\chi(x_d))\neq 0$.
Then the identity $(x_1^2+\dots+x_d^2)f_{kl}=x_kx_l$ implies  that
$$(\chi(x_1)^2+\dots+\chi(x_d)^2)\chi(f_{kl})=\chi(x_k)\chi(x_l)$$ and therefore
$$\chi(f_{kl})=(\chi(x_1)^2+\dots+\chi(x_d)^2)^{-1}\chi(x_k)\chi(x_l)=f_{kl}(x).$$
Thus  $\chi$ acts on the generators $x_j$ and $f_{kl}$, hence on the whole algebra $\cA$, by point evaluation at $x$, that is, we have $\chi=\chi_x$.

Next let us note that the quotient of $\cA$ by the ideal generated by $\R_d[\ux]$ is (isomorphic to) the algebra $C(S^d)$.
Therefore, if   $\chi$ is a chacacter of $\cA$ such that $(\chi(x_1),\dots,\chi(x_d))=0$, then it gives  a character on the algebra $C(S^d)$. Clearly, each character of $C(S^d)$ comes from a point  of $S^d$. Conversely, each point  $t\in S^d$ defines a unique character of $\cA$ by $\chi^t(f_{kl})=f_{kl}(t)$ and $\chi^t(x_j)=0$ for all $k,l,j$.
\end{proof}

 \begin{thm}\label{mpshperesinrd}
 The preorder $\sum \cA^2$ of the algebra $\cA$ satisfies (MP), that is, for each positive linear functional $\cL$ on $\cA$ there exist positive Borel measures $\nu_0$ on $S^d$ and $\nu_1\in \cM(\R^d\backslash \{0\}) $ such that for all polynomials $g$ we have
 \begin{align}\label{descptionltilde}
 \cL&(g(x,f_{11}(x),\dots, f_{dd}(x)))\nonumber\\&=\int_{S^d} g(0,f_{11}(t),\dots,f_{dd}(t)) \, d\nu_0(t) +\int_{\R^d\backslash \{0\}} g(x,f_{11}(x),\dots,f_{dd}(x)) ~ d\nu_1(x).
 \end{align}
 \end{thm}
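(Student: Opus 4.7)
The plan is to apply Theorem \ref{fibreth1} with the bounded tuple ${\sf h}=(f_{kl})_{k,l=1,\dots,d}$: these functions are uniformly bounded on $\hat{\cA}=\cK(\sum\cA^2)$ thanks to the identity $\sum_{k,l}f_{kl}^{2}=1$. Since (MP) is what is required, I aim at verifying condition $(iii)^\prime$ for every $\lambda$ in the image ${\sf h}(\hat{\cA})$.

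First I describe the image and the fibres. Applying any character to $f_{kk}f_{ll}=f_{kl}^{2}$, $f_{kl}=f_{lk}$ and $\sum_{k}f_{kk}=1$ forces $(\lambda_{kl})$ to be a symmetric, rank-one, positive semidefinite matrix of trace one, so $\lambda_{kl}=t_kt_l$ for some $t\in S^d$ unique up to sign; conversely, every such $\lambda$ is attained by $\chi^t$ of Lemma \ref{charactersetahat}. From the two types of characters described in that lemma one sees that the fibre over $\lambda=tt^\top$ is the disjoint union of the punctured line $\{\chi_{st}:s\in\R\setminus\{0\}\}$ and the single sphere character $\chi^t=\chi^{-t}$ (antipodal points give the same character, since $\chi^{\pm t}(x_j)=0$ and $\chi^{\pm t}(f_{kl})=t_kt_l$).

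Next I identify $\cA/\hat{\cI}_\lambda$. Relabel so that $t_1\neq 0$. On the whole fibre one has $t_1x_j=t_jx_1$ and $f_{kl}=t_kt_l$, so these elements lie in $\hat{\cI}_\lambda$. The assignment $x_j\mapsto(t_j/t_1)x_1$, $f_{kl}\mapsto t_kt_l$ is compatible with the defining relation $x_kx_l=(x_1^2+\cdots+x_d^2)f_{kl}$ and therefore extends to a surjective algebra homomorphism $\Phi:\cA\to\R[x_1]$. The coordinate $x_1$ separates the points of the fibre (it takes the value $st_1\neq 0$ on $\chi_{st}$ and $0$ at $\chi^t$), so $\Phi(a)=0$ is equivalent to $a$ vanishing on the entire fibre, i.e.\ to $a\in\hat{\cI}_\lambda$. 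Hence $\cA/\hat{\cI}_\lambda\cong\R[x_1]$, and the image of $\sum\cA^2$ under $\Phi$ is exactly $\sum\R[x_1]^2$: the inclusion $\subseteq$ is clear, and for $\supseteq$ every $p\in\R[x_1]\subset\R_d[\ux]\subset\cA$ satisfies $\Phi(p^2)=p^2$.

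The classical Hamburger theorem states that $\sum\R[x_1]^2$ has (MP) in $\R[x_1]$, so condition $(iii)^\prime$ of Theorem \ref{fibreth1} holds and the implication $(iii)^\prime\to(i)$ yields that $\sum\cA^2$ has (MP) in $\cA$. Given a positive functional $\cL$ on $\cA$, write the resulting Borel measure $\mu$ on $\hat{\cA}=(\R^d\setminus\{0\})\sqcup S^d$ as $\mu=\nu_1+\nu_0$ with $\nu_1:=\mu|_{\R^d\setminus\{0\}}$ and $\nu_0:=\mu|_{S^d}$; evaluating $g(x,f_{11}(x),\dots,f_{dd}(x))$ at the two types of characters of Lemma \ref{charactersetahat} then produces the decomposition (\ref{descptionltilde}).

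The principal technical point is the use of $\hat{\cI}_\lambda$ rather than $\cI_\lambda$: the latter is not real in general (e.g.\ for $t=e_1$ one has $x_j^2\in\cI_\lambda$ but $x_j\notin\cI_\lambda$ for $j\geq 2$, so $\cA/\cI_\lambda$ carries nilpotents), and only after passing to the real ideal $\hat{\cI}_\lambda$ via the equivalence $(ii)\leftrightarrow(ii)^\prime$ does the fibre quotient collapse to the polynomial algebra $\R[x_1]$, enabling the reduction to the one-dimensional Hamburger moment problem.
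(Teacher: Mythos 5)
Your proof is correct and follows essentially the same route as the paper: take ${\sf h}=(f_{kl})$, observe boundedness via $\sum_{k,l}f_{kl}^2=1$, show each fibre quotient reduces to a one-variable polynomial algebra so that Hamburger's theorem gives (MP), and invoke Theorem \ref{fibreth1}. The one genuine refinement is your use of $\hat{\cI}_\lambda$ and condition $(iii)^\prime$: the paper argues directly with $\cI_\lambda$ and asserts that $\cA/\cI_\lambda$ is a polynomial algebra in one variable, which is not literally true (as you note, $\cI_\lambda$ need not be real and the quotient can carry nilpotents, e.g.\ $x_j$ with $x_j^2\in\cI_\lambda$ for $t=e_1$), so your passage to the real ideal makes this step cleaner.
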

 \begin{proof}
 It suffices to prove that $\sum \cA^2$ has (MP). The  assertions follow  then from the definition of the property (MP) and the explicit form  of the character set  given in Lemma \ref{charactersetahat}.
 
 From the description of  $\hat{\cA}$ it is obvious that the functions $f_{kl}$, $k,l=1,\dots,d,$ are bounded on $\hat{\cA}$, so we can take them as polynomials $h_j$ in Theorem \ref{fibreth1}. Consider a  non-empty fibers  for  $\lambda=(\lambda_{kl})$, where $\lambda_{kl}\in \R$, and let $\chi\in \hat{\cA}$ be such that $\chi(f_{kl})=\lambda_{kl}$ for all $k,l$. If $\chi=\chi^t$, then $\chi(x_j)=0$ for all $j$ and hence $\cA/\cI_\lambda=\C\cdot 1$ has  trivially (MP). Now suppose that $\chi=\chi_x$ for some $x\in\R^d\backslash \{0\}$. Then $\chi_x(f_{kl})=f_{kl}(x)=\lambda_{kl}$. Since  $1=\sum_{k} f_{kk}(x)=\sum_k \lambda_{kk}$, there is a $k$ such that $\lambda_{kk}\neq 0$. From the equality   $\lambda_{kk}=f_{kk}(x)=x_k^2(x_1^2+\dots+x_d^2)^{-1}$ it follows that $x_k\neq 0$. Hence  $\frac{\lambda_{kl}}{\lambda_{kk}}=\frac{f_{kl}(x)}{f_{kk}(x)}=\frac{x_l}{x_k}$, so that $x_l=\frac{\lambda_{kl}}{\lambda_{kk}}\,x_k$ for all $l=1\dots,d$. This implies that  the quotient algebra\, $\cA/\cI_\lambda$\, of $\cA$ by the fiber ideal\, $\cI_\lambda$\,  is an algebra of polynomials in the songle variable $x_k$. Therefore, the preorder $\sum (\cA/\cI_\lambda)^2$ satisfies (MP).  Hence $\cA$ itself obeys (MP) by  Theorem \ref{fibreth1}.
\end{proof}

The main result of this  section is the following extension theorem.
\begin{thm}\label{extensionthm}
 A linear functional $L$ on $\R_d[\ux]$ is a  moment functional if and only if it has an extension to a positive linear functional $\cL$ on the larger algebra $\cA$. 
\end{thm}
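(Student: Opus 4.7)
My plan is to prove both directions by combining Theorem \ref{mpshperesinrd} with a measure-theoretic identification between $\R^d$ and the character space $\hat{\cA} = (\R^d\setminus\{0\}) \sqcup S^d$.

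For the easy direction ($\Rightarrow$), suppose $L$ is a moment functional with representing measure $\mu$ on $\R^d$. Since the generators $f_{kl}$ satisfy $\sum_{k,l} f_{kl}^2 = 1$, every $f\in\cA$ is dominated by a polynomial in $|x|$, so $f$ is $\mu$-integrable on $\R^d\setminus\{0\}$. Fix any point $t\in S^d$, giving the character $\chi^t$ of $\cA$ described in Lemma \ref{charactersetahat}. Define
\begin{equation*}
\cL(f) := \int_{\R^d\setminus\{0\}} f(x)\, d\mu(x) + \mu(\{0\})\, \chi^t(f), \qquad f\in\cA.
\end{equation*}
For $p\in\R_d[\ux]$ we have $\chi^t(p)=p(0)$, so $\cL(p) = \int_{\R^d} p\, d\mu = L(p)$; thus $\cL$ extends $L$. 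Positivity follows since $\chi^t$ is an algebra homomorphism, giving $\cL(f^2) = \int f^2\, d\mu + \mu(\{0\})\chi^t(f)^2 \ge 0$.

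For the main direction ($\Leftarrow$), suppose $\cL$ is a positive extension of $L$ to $\cA$. By Theorem \ref{mpshperesinrd} there exist positive Borel measures $\nu_0$ on $S^d$ and $\nu_1$ on $\R^d\setminus\{0\}$ such that the integral representation \eqref{descptionltilde} holds. Specializing that formula to a polynomial $g(y_1,\dots,y_d,z_{11},\dots,z_{dd}) = p(y_1,\dots,y_d)$ with $p\in\R_d[\ux]$ collapses the $z$-variables and yields
\begin{equation*}
L(p) = \cL(p) = \int_{S^d} p(0)\, d\nu_0(t) + \int_{\R^d\setminus\{0\}} p(x)\, d\nu_1(x) = \nu_0(S^d)\, p(0) + \int_{\R^d\setminus\{0\}} p\, d\nu_1.
\end{equation*}
Setting $\mu := \nu_1 + \nu_0(S^d)\,\delta_0$ produces a positive Borel measure on $\R^d$ with $L(p) = \int_{\R^d} p\, d\mu$ for all $p\in\R_d[\ux]$, so $L$ is a moment functional.

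The key insight, and the only place any real work is needed, is the observation that the adjoined functions $f_{kl}$ compactify the origin by attaching the sphere $S^d$ as extra characters, so a representing measure on $\hat{\cA}$ can be converted into a representing measure on $\R^d$ by collapsing the sphere part to a point mass at $0$. There is no serious technical obstacle beyond invoking Theorem \ref{mpshperesinrd}; the integrability of elements of $\cA$ against a moment measure on the complement of $0$ is automatic from the bound $|f_{kl}|\le 1$.
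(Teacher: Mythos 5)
Your proof is correct and follows essentially the same route as the paper: the forward direction transfers $\nu_0$ to a point mass $\nu_0(S^d)\delta_0$ exactly as the paper does, and the converse direction builds the extension $\cL(f)=\int_{\R^d\setminus\{0\}}f\,d\mu+\mu(\{0\})\chi^t(f)$, which is the paper's construction with the specific character $\lim_{t\to 0}f(t,0,\dots,0)$ replaced by an arbitrary $\chi^t$, $t\in S^d$. Your explicit remarks on integrability of elements of $\cA$ against $\mu$ are a welcome detail the paper leaves implicit, but they do not change the argument.
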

\begin{proof} 
Assume first that $L$ has an extension to a positive linear functional $\cL$ on $\cA$. By Theorem \ref{mpshperesinrd}, the functional $\cL$ on $\cA$ is of the form described by equation (\ref{descptionltilde}). We define a positive Borel measure $\mu$ on $\R^d$ by $$\mu(\{0\})=\nu_0(S^d),\quad  \mu(M\backslash \{0\})=\nu_1(M\backslash \{0\}).$$ Let $p\in \R_d[\ux]$. Setting $g(x,0,\dots,0)=p(x)$ in (\ref{descptionltilde}), we get 
$$
L(p)=\cL(p)= \nu_0(\{0\})p(0)+ \int_{\R^d\backslash \{0\}} g(x,0,\dots,0) ~ d\nu_1(x)=\int_{\R^d} p(x)\, d\mu(x).
$$
Thus $L$ is moment functional on $\R_d[\ux]$ with representing measure $\mu$.

Conversely, suppose that  $L$ is a moment functional on $\R_d[\ux]$ and let $\mu$ be a  representing measure. 
Since $f_{kl}(t,0,\dots,0)=\delta_{k1}\delta_{l1}$ for $t\in \R, t\neq 0,$  we have 
$\lim_{t\to 0} f_{kl}(t,0,\dots,0)=\delta_{k1}\delta_{l1}$. Hence there is  a well-defined character on the algebra $\cA$ given by
 $$\chi(f)=\lim_{t\to 0} f(t,0,\dots,0),\quad f\in \cA,$$
  and $\chi(p)=p(0)$ for $p\in \R[\ux]$. Then, for $f\in \R_d[\ux]$, we have
\begin{align}\label{defltilde}
L(f)=\mu(\{0\})\chi(f)+\int_{\R^d\backslash \{0\}} f(x)\, d\mu(x).
\end{align}
For $f\in \cA$ we define $\cL(f)$ by the right-hand side of (\ref{defltilde}). Then $\cL$ is a  positive linear functional on $\cA$ which extends $L$. 
\end{proof}
 Remarks.\\ 1. Another type of extension theorems has been  derived in \cite{pv}. The main difference  to the above theorem is that in \cite{pv}, see e.g. Theorem 2.5, a function $$h(x):=(1+x_1^2+\dots+x_d^2+p_1(x)^2+\dots+p_k(x)^2)^{-1}$$ is added to the algebra, where $p_1,\dots,p_k\in \R_d[\ux]$ are fixed.  Since $h(x)$ is  bounded  on the  character set and  non-empty fibres  consist of  single points, the existence assertions of these results follow immediately from  Theorem \ref{fibreth1}. In this case the representing measure of the extended functional is unique (see \cite[Theorem 2.5]{pv}.

2. The measure $\nu_1$ in Theorem \ref{mpshperesinrd} and hence the representing measure $\mu$ for the functional $\cL$ in Theorem \ref{extensionthm} are  not uniquely determined by $\cL$. (A counter-example is easily constructed by taking some appropriate measure supported by a coordinate axis.)  Let $\mu_{\rm rad}$ denote the measure on $[0,+\infty)$  obtained by transporting $\mu$  by the mapping $x\to \|x\|^2$. Then, as shown in \cite[p. 2964, Nr 2.]{ps}, if  $\mu_{\rm rad}$ is determinate on $[0,+\infty)$, then $\mu$ is is uniquely determined by $\cL$.

\section{Application to the complex moment problem}\label{applcmp}
Given a  complex $2$-sequence $s=(s_{m,n})_{(m,n)\in \N_0^2}$ the  complex moment problem asks when does there exist a positive Borel measure $\mu$ on $\C$ such that 
the function $z^m\ov{z}^n$ on $\C$ is $\mu$-integrable and
\begin{align}\label{cmp}
s_{mn} =\int_\C~ z^m\ov{z}^n\, d\mu(z)\quad {\rm for~all}\quad (m,n)\in \N_0^2 .
\end{align}

The semigroup algebra  $\C \N_0^2$ of the $*$-semigroup $\N_0^2$ with involution $(m,n):=(n,m)$, $(m,n)\in \N_0^2$, is the $*$-algebra $\C[z,\ov{z}]$ with involution given by $z^*=\ov{z}$. If $L$ denotes the linear functional on $\C[z,\ov{z}]$ defined by 
\begin{align*}
L(z^m\ov{z}^n)=s_{m,n},\quad (m,n)\in \N_0^2
\end{align*} then (\ref{cmp}) means that
\begin{align*}
L_s(p)=\int_\C \, p(z,\ov{z})\, d\mu(z), \quad p\in\C[z,\ov{z}].
\end{align*}
Clearly,  $\N_0^2$ is a subsemigroup of the larger $*$-semigroup 
$$
\cN_+=\{(m,n)\in \Z^2: m+n\geq 0\} \quad {\rm with ~involution}\quad (m,n)^*=(n,m).
$$
The following fundamental theorem was proved by J. Stochel and F.H. Szafraniec \cite{stochelsz}.
\begin{thm}\label{extcmpstochelsz}
A linear functional $L$ on $\C[z,\ov{z}]$ is a moment functional if and only if $L$ has an extension to a positive linear functional $\cL$ on the $*$-algebra $\C\cN_+$. 
\end{thm}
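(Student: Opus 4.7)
I would reduce Theorem \ref{extcmpstochelsz} to Theorem \ref{extensionthm} in dimension $d=2$, identifying $\C$ with $\R^2$ via $z=x_1+\ii x_2$ and letting $\cA$ be the algebra from Section \ref{extensionsection} with $d=2$. Write $\cA_\C=\cA+\ii\cA$ for its complexification, a commutative complex $*$-algebra in the sense of the end of Section \ref{propertiessmpandmpandfibretheorem}. The key structural fact is that
\[
\Phi:\C\cN_+\to\cA_\C,\qquad (m,n)\longmapsto z^m\ov z^n,
\]
is a $*$-isomorphism, where for $(m,n)\in\cN_+$ with a negative component $z^m\ov z^n$ is interpreted as the function $r^{m+n}e^{\ii(m-n)\theta}$ on $\R^2\setminus\{0\}$. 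That $\Phi$ is a $*$-homomorphism is immediate since $(m,n)^*=(n,m)$ matches complex conjugation. Surjectivity follows from
\[
u:=\Phi(1,-1)=\frac{z}{\ov z}=\frac{(x_1+\ii x_2)^2}{x_1^2+x_2^2}=(f_{11}-f_{22})+2\ii f_{12},
\]
together with $f_{11}+f_{22}=1$, which recovers all $f_{kl}$ in the image; combined with the obvious $z,\ov z\in\Phi(\C\cN_+)$ this gives every generator of $\cA_\C$. Injectivity follows from linear independence of the polar monomials $r^{m+n}e^{\ii(m-n)\theta}$ over $(m,n)\in\cN_+$: fix $r$ and use independence of the $e^{\ii t\theta}$, then for each $t$ use independence of the nonnegative powers of $r$.

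Next I would translate positivity and the moment condition through $\Phi$. The hermitean part $(\C\cN_+)_h$ corresponds to $\cA$, and the hermitean part of $\C[z,\ov z]=\C\N_0^2$ to $\R_2[\ux]$. By the $\C/\R$ dictionary at the end of Section \ref{propertiessmpandmpandfibretheorem}, a $\C$-linear functional $\cL$ on $\C\cN_+$ is positive if and only if its restriction $\cL_0$ to $\cA$ is nonnegative on $\sum\cA^2$; similarly for $\C[z,\ov z]$ and $\R_2[\ux]$. Moreover, a functional $L$ on $\C[z,\ov z]$ is a moment functional in the sense of (\ref{cmp}) if and only if $L_0:=L|_{\R_2[\ux]}$ is a moment functional on $\R_2[\ux]$, the representing measure on $\C\cong\R^2$ being the same in both pictures. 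Under these identifications Theorem \ref{extcmpstochelsz} reads: $L_0$ is a moment functional on $\R_2[\ux]$ if and only if it extends to a positive linear functional on $\cA$, which is exactly Theorem \ref{extensionthm} for $d=2$.

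The only nontrivial obstacle is the identification $\C\cN_+\cong\cA_\C$ in the first step. Its content is that the unitary $(1,-1)$ that must be adjoined to $\C\N_0^2$ to generate $\C\cN_+$ carries exactly the same information as the bounded rational functions $f_{kl}$ that are adjoined to $\R_d[\ux]$ in the setup of Theorem \ref{extensionthm}. Once this match is in place, both directions of Theorem \ref{extcmpstochelsz} follow from Theorem \ref{extensionthm} by purely formal translation between the complex $*$-algebra $\C\cN_+$ and the real algebra $\cA$.
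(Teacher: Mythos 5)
Your proposal is correct and takes essentially the same route as the paper: both identify the hermitean part of $\C\cN_+$ with the real algebra $\cA$ of Section \ref{extensionsection} for $d=2$ (generated by $x_1,x_2,f_{11},f_{22},f_{12}$, which the paper extracts from $v=z\ov{z}^{-1}$ just as you do from $u=z/\ov{z}$) and then invoke Theorem \ref{extensionthm}. You are somewhat more explicit about injectivity of the identification and the translation of positivity and of moment functionals, but the substance is identical.
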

In \cite{stochelsz} this theorem was stated 
in terms of semigroups:\smallskip

{\it A complex sequence $s=(s_{m,n})_{(m,n)\in \N_0^2}$   is a moment sequence on $\N_0^2$ if and only if 
there exists a positive semidefinite sequence\,  $\tilde{s}=(\tilde{s}_{m,n})_{(m,n)\in \cN_+}$\,  on the $*$-semigroup $\cN_+$ such that $\tilde{s}_{m,n}=s_{m,n}$ for all $(m,n)\in \N_0^2$.}
\medskip

In order to prove Theorem \ref{extcmpstochelsz} we first describe the semigroup $*$-algebra $\C\cN_+$. Clearly,   $\C\cN_+$ is the complex $*$-algebra   generated by the functions $z^m\ov{z}^n$ on $\C\backslash \{0\}$, where $m,n\in \Z$ and $m+n\geq 0$.
 If $r(z)$ denotes the modulus and $u(z)$ the phase of $z$, then  $z^m\ov{z}^n=r(z)^{m+n}u(z)^{m-n}$. Setting $k=m+n$, it follows that
 $$\C\cN_+=\Lin \{ r(z)^k u(z)^{2m-k};k\in \N_0,m\in \Z\}\,.
 $$ 
 The functions $r(z)$ und $u(z)$ itself are not in $\C\cN_+$, but $r(z)u(z)=z$ and $v(z):=u(z)^2=z \ov{z}^{-1}$ are in $\C\cN_+$ and they  generate the $*$-algebra $\C\cN_+$. Writing $z=x_1+{\ii }x_2$ with $x_1,x_2\in \R$, we get
 $$
 1+v(z)=1+\frac{x_1+{\ii}x_2}{x_1-{\ii}x_2} =2\,\frac{x_1^2+{\ii}\,x_1x_2}{x_1^2+x_2^2},~~ 1-v(z)= 2\, \frac{x_2^2-{\ii}\,x_1x_2}{x_1^2+x_2^2}\,.
 $$
This implies that the complex algebra $\C\cN_+$ is generated  by the  five functions
\begin{align}\label{fivef}
x_1,~~ x_2,~~ \frac{x_1^2}{x_1^2+x_2^2}\,,~~\frac{x_2^2}{x_1^2+x_2^2}\,,~~\frac{x_1x_2}{x_1^2+x_2^2}\,.
\end{align}
Obviously,  the hermitean part $(\C\cN_+)_h$ of the complex $*$-algebra\, $\C\cN_+$\, is just the {\it real} algebra generated by the functions  (\ref{fivef}).
This real algebra is the special case $d=2$ of the $*$-algebra $\cA$ treated in Section \ref{extensionsection}. Therefore, if we identify $\C$ with $\R^2$, the assertion of Theorem \ref{extcmpstochelsz} follows at once from  Theorem \ref{extensionthm}.

\section{Application to the two-sided complex moment problem}\label{twosdidecmp}

The two-sided complex moment problem is the moment problem for the $*$-semigroup $\Z^2$ with involution $(m,n):=(n,m)$.
Given a   sequence $s=(s_{m,n})_{(m,n)\in \Z^2}$ it asks when does there exist a positive Borel measure $\mu$ on $\C^\times:=\C\backslash \{0\}$  such that 
the function $z^m\ov{z}^n$ on $\C^\times$ is $\mu$-integrable and
\begin{align*}
s_{mn} =\int_{\C^\times}~ z^m\ov{z}^n\, d\mu(z)\quad {\rm for~all}\quad (m,n)\in \Z^2 .
\end{align*}
Note that this requires conditions for the measure $\mu$ at infinity and at zero. 

The following basic result was obtained by T.M. Bisgaard \cite{bisgaard}.
\begin{thm}\label{bisgaardthm}
A linear functional $L$ on $\C\Z^2$ is a moment functional if and only if $L$ is a positive functional, that is, $L(f^*f)\geq 0$ for all $f\in \C\Z^2$. 
\end{thm}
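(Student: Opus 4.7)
The plan is to apply the generalized fibre theorem (Theorem~\ref{fibreth1}) to the hermitean part of $\C\Z^2$ and reduce everything to a one-variable strong Hamburger problem on $\R^\times$. By the reduction at the end of Section~\ref{propertiessmpandmpandfibretheorem}, it suffices to show that every positive $\R$-linear functional on the commutative real unital algebra $\cA := (\C\Z^2)_h$ is a moment functional on the character set $\hat\cA$. Writing $z = x_1 + \ii x_2$ and using $z^{-1} = (x_1 - \ii x_2)/(x_1^2+x_2^2)$, one identifies $\cA = \R[x_1, x_2, (x_1^2+x_2^2)^{-1}]$; since $(x_1^2+x_2^2)^{-1}$ must act invertibly on every character, $\hat\cA$ is parametrised by $\R^2\setminus\{0\}\cong\C^\times$, which is precisely the set on which Bisgaard's measure is required to live. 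So Theorem~\ref{bisgaardthm} reduces to proving (MP) for the preorder $\sum\cA^2$ in $\cA$.

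For this I would apply Theorem~\ref{fibreth1} with $\cT=\sum\cA^2$ and the bounded generators $h_{kl} := f_{kl} = x_k x_l/(x_1^2+x_2^2)\in\cA$, $1\le k\le l\le 2$, exactly as in the proof of Theorem~\ref{mpshperesinrd}. The same fibre analysis carries over: on every non-empty fibre there is an index $k$ with $\lambda_{kk}\ne 0$, and the relations $f_{kl}=\lambda_{kl}$ force $x_l=(\lambda_{kl}/\lambda_{kk})x_k$ together with $(x_1^2+x_2^2)^{-1} = \lambda_{kk} x_k^{-2}$ in the quotient. Consequently $\cA/\cI_\lambda \cong \R[t, t^{-1}]$, the Laurent polynomial algebra, whose character set is $\R^\times$. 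By the implication (iii)$\to$(i) of Theorem~\ref{fibreth1} it remains only to verify (MP) for each fibre preorder $\sum\R[t,t^{-1}]^2$.

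The key step, and the main obstacle, is therefore the one-variable strong Hamburger lemma: every positive linear functional $L$ on $\R[t, t^{-1}]$ is the moment functional of a positive Borel measure on $\R^\times$. I would argue this by a standard GNS/self-adjoint-extension construction. Form the pre-Hilbert space $\R[t,t^{-1}]/\cN$ with inner product $\langle p, q\rangle := L(pq)$, complete to a Hilbert space $\Hil$, and let $T$ be the densely defined symmetric operator of multiplication by $t$. Multiplication by $t^{-1}$ provides a densely defined inverse of $T$, and since $T$ is real with respect to complex conjugation its deficiency indices are equal; von Neumann's criterion thus yields a self-adjoint extension $\hat T$ on the complexification of $\Hil$. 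The invertibility of $T$ on its core forces $\ker\hat T = 0$, so the spectral measure $E$ of $\hat T$ satisfies $E(\{0\}) = 0$, and setting $\mu := \langle E(\cdot)\mathbf{1},\mathbf{1}\rangle$ yields a positive Borel measure on $\R^\times$ with $L(t^n) = \langle \hat T^n\mathbf{1},\mathbf{1}\rangle = \int_{\R^\times} t^n\, d\mu(t)$ for every $n\in\Z$. This spectral-theoretic input appears unavoidable: $\R[t,t^{-1}]$ admits no non-constant bounded elements, so a second application of Theorem~\ref{fibreth1} is not available.
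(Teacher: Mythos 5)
Your proposal is correct and follows essentially the same route as the paper: pass to the hermitean part $\cA$ of $\C\Z^2$, observe that $\hat\cA\cong\R^2\setminus\{0\}$, fibre over the bounded elements $f_{kl}=x_kx_l(x_1^2+x_2^2)^{-1}$, and invoke Theorem \ref{fibreth1}. The one place where you go beyond the paper is also the one place where the paper is terse: it merely says ``arguing as in the proof of Theorem \ref{mpshperesinrd}'', but there the fibre algebras were one-variable \emph{polynomial} algebras (so (MP) is Hamburger's theorem), whereas here the generators $y_1,y_2$ survive in the quotient as multiples of $x_k^{-1}$ and the fibre algebras are one-variable \emph{Laurent} polynomial algebras, so what is needed is the solvability of the strong (two-sided) Hamburger moment problem on $\R^\times$. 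You correctly isolate this lemma and your proof of it is sound: $T$ commutes with conjugation so admits a self-adjoint extension $\hat T$, and since $t\cdot\R[t,t^{-1}]=\R[t,t^{-1}]$ the range of $\hat T$ is dense, whence $\ker\hat T=(\mathrm{ran}\,\hat T)^\perp=0$ and $E(\{0\})=0$. (An alternative, avoiding operator theory, is to note that a Laurent polynomial nonnegative on $\R^\times$ equals $t^{-2N}$ times a polynomial nonnegative on $\R$, hence lies in $\sum\R[t,t^{-1}]^2$, and then apply a Haviland-type theorem with the proper element $t^2+t^{-2}$.) Your observation that $\R[t,t^{-1}]$ has no non-constant bounded elements, so the fibre theorem cannot be iterated, is also accurate.
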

In terms of $*$-semigroups the main assertion of this theorem says that {\it each positive semidefinite sequence  on $\Z^2$ is a moment sequence on $\Z^2$.} This result  is somewhat surprising, since $C^\times$ has dimension $2$ and no additional condition (such as strong positivity or some appropriate extension) is required.

First we reformulate the semigroup $*$-algebra $\C\Z^2$.
Clearly, $\C\Z^2$ is generated by the functions $z, \ov{z}, z^{-1},\ov{z}^{\, -1}$ on the complex plane, that is, $\C\Z^2$ is the $*$-algebra $\C[z,\ov{z},z^{-1},\ov{z}^{-1}]$ of  complex Laurent polynomials in $z$ and $\ov{z}$. A vector space basis of $\cA_\C$ is the set $\{z^k\ov{z}^l; k,l\in \Z\}$. Writing $z=x_1+{\ii} x_2$ with $x_1,x_2\in \R$ we  have
$$
z^{-1}=\frac{x_1-{\ii} x_2}{x_1^2+x_2^2}~~~{\rm and}~~~{\ov{z}}^{\,-1}=\frac{x_1+{\ii} x_2}{x_1^2+x_2^2}.
$$
Hence $\C\Z^2$ is the complex  unital algebra generated by the four  functions 
\begin{align}\label{bisgaard}
x_1, ~~x_2,~~ y_1:= \frac{x_1}{x_1^2+x_2^2}\,,~~y_2:=\frac{x_2}{x_1^2+x_2^2}
\end{align}
on $\R^2\backslash \{0\}$. 
Note that
\begin{align}\label{yx2ide}
(y_1+{\ii}y_2)(x_1-{\ii }x_2)=1.
\end{align}
{\it Proof of Theorem \ref{bisgaardthm}:}\\
As above we identify $\C$ and $\R^2$ in the obvious way. As discussed at the end of Section \ref{propertiessmpandmpandfibretheorem}, the hermitean part of the complex $*$-algebra $\C \Z^2$ is a real algebra $\cA$.
First we determine the character set $\hat{\cA}$ of 
$\cA$. 
Obviously, the point evaluation at each point $x\in  \R^2\backslash \{0\}$ defines uniquely a character $\chi_x$ of $\cA$.  From (\ref{yx2ide}) it follows at once that there is no character $\chi$ on $\cA$ for which $\chi(x_1)=\chi(x_2)=0$. Thus,
$$
\hat{\cA}=\{\chi_x; x\in \R^2, x\neq 0\, \}.
$$
The three functions 
$$h_1(x)=x_1y_1=\frac{x_1^2}{x_1^2+x_2^2}\,,~  h_2(x)=x_2y_2=\frac{x_2^2}{x_1^2+x_2^2}\,,~ h_3(x)=2x_1y_2=\frac{x_1x_2}{x_1^2+x_2^2}$$ 
are elements of $\cA$ and bounded on $\hat{\cA}\cong\R^2\backslash \{0\}$. Therefore, arguing as  in the proof of Theorem \ref{mpshperesinrd} it follows that the preorders $\sum (\cA/\cI_\lambda)^2$ for all fibers satisfy (MP) and so does $\sum \cA^2$ by  Theorem \ref{fibreth1}. Since\, $\hat{\cA}\cong\R^2\backslash \{0\}=\C^\times$,  this gives the assertion. 
\hfill $\Box$
\smallskip

Remark. The  algebra $\cA$ generated by the four functions $x_1,x_2,y_,y_2$ on $\C^\times $ is an interesting structure: The generators satisfy   the  relations
$$
x_1y_1+x_2y_2=1\quad {\rm and}\quad (x_1^2+x_2^2)(y_1^2+y_2^2)=1 
$$
and there is a $*$-automorphism $\Phi$ of the real algebra $\cA$ (and hence of the complex $*$-algebra $\C\Z^2$) given by $\Phi(x_j)=y_j$ and $\Phi(y_j)=x_j$, $j=1,2$.

\medskip

\noindent{\bf Acknowledgement.} The author would like to thank  Tim Netzer for valuable discussions on the subject of this paper.

\bibliographystyle{amsalpha}

\begin{thebibliography}{A}
\bibitem{bisgaard}
Bisgaard, T.M., The two-sided complex moment problem, Ark. Mat. {\bf 27}(1989), 23--28. 
\bibitem{marshall}
Marshall, M., {\it Positive polynomials and sums of squares}, Math. Surveys and Monographs {\bf 146}, Amer. Math. Soc., 2008.
\bibitem{netzer}
Netzer, T., An elementary proof of Schm\"udgen's theorem on the moment problem of closed semi-algebraic sets, Proc. Amer. Math. Soc.  {\bf 136}(2008), 529--537.
\bibitem{ps}
Putinar, N. and Schm\"udgen, K., Multivariate determinateness, Indiana Univ. Math. J. {\bf 57}(2008), 2931--2968.
\bibitem{pv}
Putinar, N. and Vasilescu, F.-H., Solving the moment problem by dimension extension, Ann. Math. {\bf 149}(1999), 1087--1069. 
\bibitem{scheiderer}
Scheiderer, C., Non-existence of degree bounds for weighted sms of squares representations, J. Complexity  {\bf 2}(2005), 823--844. 
\bibitem{schm2003}
Schm\"udgen, K., On the moment problem of closed semi-algebraic sets,  J. Reine Angew. Math.  {\bf 558}(2003), 225--234.
\bibitem{stochelsz}
Stochel, J. and Szafraniec, F.H., The complex moment problem and subnormality: a polar decomposition approach. J. Funct. Anal. {\bf 159}(1998), 432--491. 

\end{thebibliography}

\end{document}